\definecolor{labelkey}{rgb}{0.6,0,1}
\theoremstyle{plain}
\newtheorem{theorem}{Theorem}[section]
\theoremstyle{definition}
\newtheorem{definition}[theorem]{Definition}
\def\bhyp#1{\begin{equation}\label{#1}\begin{array}{c}}
\def\ehyp{\end{array}\end{equation}}
\newcounter{cst}
\theoremstyle{remark}
\numberwithin{equation}{section}
\numberwithin{figure}{section}
\newcommand{\RR}{{\mathbb R}}
\newcommand{\NN}{{\mathbb N}}
\def\O{{\Omega}}
\def\dsp{\displaystyle}
\def\disc{{\mathcal D}}
\def\dr{\partial}
\newcommand{\cK}{{\mathcal K}}
\DeclareMathOperator*{\argminB}{argmin}
\newcommand{\x}{\pmb{x}}
\newif\ifcorr\corrtrue
\definecolor{violet}{rgb}{0.580,0.,0.827}
\newcommand{\ud}{\, \mathrm{d}} 
\newcommand{\cC}{{\mathbb C}}
\newcommand{\cS}{{\mathbb S}}
\newcommand{\cW}{{\mathbb W}}
\title[Analysis of Schemes for Fourth Order VI]{A new Discrete Analysis Of Fourth Order Elliptic Variational Inequalities}
\author{Yahya Alnashri\\}
\address[Yahya Alnashri]{Department of Mathematics, Al-Qunfudah University College, Umm Al-Qura University, Saudi Arabia}
\email{yanashri@uqu.edu.sa}
\subjclass[2010]{35J86, 65N12, 65N15, 76S05}
\keywords{Fourth order elliptic variational inequalities, bi-harmonic, obstacle, gradient discretisation method, gradient schemes, error estimate, convergence analysis.}
\date{\today}
\begin{document}

\begin{abstract}
This paper applies the gradient discretisation method (GDM) for fourth order elliptic variational inequalities. The GDM provides a new formulation of error estimates and a complete discrete analysis of different approximating schemes. We show that the convergence is unconditional. Classical assumptions on data are only sufficient to establish the convergence results. These results are applicable to all schemes that fall in the framework of GDM.
\end{abstract}
\maketitle

\section{Introduction}
Fourth order variational inequalities are used to model numerous problems arising in mechanics and physics \cite{N-21-Louis,Stampacchia-2000,JF-1987}. Mathematical results corresponding to the well-posedness, stability, and regularity of the solutions to obstacle problems can be found in \cite{ES-1,Stampacchia-2000,R-1,R-2,R-3,R-4,R-5,Free-1988}.

Numerically, fourth order variational inequalities have been approximated by several schemes. \cite{F-1974} establishes a generic convergence rate of conforming methods applied to variational inequalities. With smooth data, \cite{Brezz-1977} drives the best error estimate for quadratic and linear finite element methods. Error estimates for discontinuous Galerkin methods have been provided in \cite{DG-2019}. Without studying the convergence rates, \cite{Francesco-1980} solves variational inequalities by mixed finite element methods. \cite{Wang-1990,Wang-1992} generates a convergence order for the non conforming finite element method.

Although different studies apply finite volume methods (as in \cite{Droniou-2010,Antonietti-2014,Antonietti-13,FV-2001,crouzeix-raviart}) to second order variational inequalities, it seems that such these schemes have not yet been designed for the fourth order variational inequalities. 

The purpose of this work is to extend the gradient discretisation methods \cite{30} to elliptic variational inequalities with a fourth order operator to obtain general error estimates and convergence analysis that hold for different conforming and non conforming numerical methods.

The outlines of this paper are as follows: Section \ref{sec-model} states the formulation of continuous problems and its full discrete scheme. Section \ref{sec-results} introduces and proves the main theoretical results, error estimates, and convergences. 

\section{Continuous and discrete setting}\label{sec-model}
Let $\O\subset \RR^d$ ($d>1$) be a bounded connected domain with boundary $\dr\O$, $f\in L^2(\O)$, the barrier $\psi \in C^2(\O)\cap C(\overline \O)$ and $\psi \geq 0$ on $\dr\O$. In this paper, we study here the following fourth order variational inequality: Seek $\bar c \in \mathcal K$ satisfying

\begin{equation}\label{weak-obs}
\dsp\int_\O \Delta \bar c(\x)\Delta(\bar c-\varphi)(\x) \ud\x \leq \dsp\int_\O f(\x)(\bar c(\x)-\varphi(\x))\ud \x, \quad \forall \varphi\in \cK,
\end{equation}
where the non empty closed convex set $\cK$ is defined by
\[
\cK:=\{\varphi\in H_0^2(\O)\;:\; \varphi \leq \psi \mbox{ in } \O\}.
\]
Note that the standard theory founded in \cite{Stampacchia-2000} shows that the above problem is well-posed. We see that the model can be formulated by the following equivalent energy minimization equation:
\[
\mbox{Find $\bar c\in \cK$ such that }, \quad \argminB_{\varphi\in\cK}\frac{1}{2}\dsp\int_\O \Delta \bar c(\x)\Delta \varphi(\x) \ud\x - \dsp\int_\O f(\x)\varphi(\x)\ud \x .
\] 

Here, we define the discrete elements (called gradient discretisation) to construct the approximation scheme for our problem.
 
\begin{definition}\label{def-GD}
Let $\O$ be an open domain of $\RR^d$ ($d> 1$). A gradient discretisation $\disc$ for fourth order obstacle problems is give by a family $\disc=(X_{\disc,0},\Pi_\disc,\nabla_\disc,\Delta_\disc)$, where:
\begin{itemize}
\item The discrete set $X_{\disc,0}$ is a finite-dimensional vector space on $\RR$, dealing with the unknowns of the method.
\item The linear operator $\Pi_\disc : X_{\disc,0} \to L^2(\O)$ is the reconstruction of the approximate function.
\item The linear operator $\nabla_\disc : X_{\disc,0} \to L^2(\O)^d$ is the reconstruction of the gradient of the function.
\item $\Delta_\disc: X_{\disc,0} \to L^2(\O)$ is a linear mapping to construct a discrete of the bi-harmonic $\Delta$ form, and must be defined so that $\| \Delta_\disc\cdot \|_{L^2(\O)}$ is a norm on $X_{\disc,0}$.
\end{itemize}
\end{definition}

\begin{definition}
Let $\disc$ be a gradient discretisation. The discrete version of the priblem\eqref{weak-obs} is given by
\begin{equation}\label{disc-obs}
\left.
\begin{array}{ll}
&\mbox{ Find $c\in \cK_\disc:=\{\varphi \in X_{\disc,0}\; :\; \Pi_\disc \varphi \leq \psi\}$ such that, for all $\varphi\in \cK_\disc$},\\
&\dsp\int_\O \Delta_\disc c(\x)\Delta_\disc(c-\varphi)(\x) \ud\x \leq \dsp\int_\O f(\x)\Pi_\disc(c(\x)-\varphi(\x))\ud \x.
\end{array}
\right.
\end{equation}
\end{definition}

The accuracy of this approximation can be measured by the following three indicators; the first one is the constant $\cC_\disc$, which measures the coercivity and is defined by
\begin{equation}\label{corc-obs}
\cC_\disc = \dsp\max_{\omega \in X_{\disc,0}\setminus\{0\}}\left( \frac{\| \Pi_\disc \omega \|_{L^2(\O)}}{\| \Delta_\disc \omega\|_{L^2(\O)}},\; \frac{\| \nabla_\disc \omega \|_{L^2(\O)^d}}{\| \Delta_\disc \omega \|_{L^2(\O)}} \right).
\end{equation}
It implies the discrete Poincar\'e inequalities, for any $w\in X_{\disc,0}$,
\begin{equation}\label{eq-ponc}
\| \Pi_\disc \omega \|_{L^2(\O)} \leq \cC_\disc \| \Delta_\disc \omega\|_{L^2(\O)} \mbox{ and }
\| \nabla_\disc \omega \|_{L^2(\O)^d} \leq \cC_\disc \| \Delta_\disc \omega\|_{L^2(\O)}.
\end{equation}
The second quantity is the function $\cS_\disc:\cK \to \cK_\disc$, which measures the interpolation error and it is given by: For all $v \in \cK$, 
\begin{equation}\label{const-obs}
\begin{aligned}
\cS_\disc(v)=\dsp\min_{\omega\in \cK_\disc}
\Big( \| \Pi_\disc \omega &- v \|_{L^2(\O)} + \|\nabla_\disc \omega - \nabla v \|_{L^2(\O)^d}\\
&\quad+ \| \Delta_\disc \omega - \Delta v \|_{L^2(\O)}  \Big).
\end{aligned}
\end{equation}
The last one is the function $\cW_\disc:H_\Delta(\O) \to \RR$, which refers to the limit--conformity and it is defined by: For $v \in H_\Delta(\O)$,
\begin{equation}\label{conf-obs}
\cW_\disc(v)=\dsp\max_{\omega\in X_{\disc,0}\setminus\{0\}}
\frac{1}{\| \Delta_\disc \omega \|_{L^2(\O)}}\dsp\int_\O \big( \Delta v \Pi_\disc \omega - \dsp\int_\O v \Delta_\disc \omega \big) \ud \x,
\end{equation}
where $H_\Delta(\O):=\left\{ v \in L^2(\O)\;:\;\Delta v \in L^2(\O) \right\}$.

\begin{definition}
Let $(\disc_m)_{m\in\NN}$ be a sequence of a gradient discretisation in the sense of definition \ref{def-GD}. We say that
\begin{itemize}
\item $(\disc_m)_{m\in\NN}$ is coercive if there exists $\cC_P\in \RR^+$ such that $\cC_{\disc_m} \leq \cC_P$ for all $m \in \NN$.

\item $(\disc_m)_{m\in\NN}$ is consistent if
\[
\mbox{for all } v \in \cK, \quad \lim_{m\to\infty}\cS_\disc(v)=0.
\]

\item $(\disc_m)_{m\in\NN}$ is limit--conforming if
\[
\mbox{for all } v \in H_\Delta(\O), \quad \lim_{m\to\infty} \cW_\disc(v)=0.
\]

\item $(\disc_m)_{m\in\NN}$ is compact if for any sequence $(\omega_m)_{m\in\NN}$ in $X_{{\disc_m},0}$, such that $\| \Delta_{\disc_m} \omega_m \|_{L^2(\O)}$ is a bounded, then there exists $\varphi \in H_0^1(\O)$, such that the sequence $(\Pi_{\disc_m}\omega_m)_{m\in\NN}$ converges strongly to $\varphi$ in $L^2(\O)$, as $m \to \infty$. 
\end{itemize}
\end{definition}

\section{Main results}\label{sec-results}
Let us now introduce the main results; the general error estimates and the convergence results. We begin with defining the continuous interpolant $I_\disc: \cK \to \cK_\disc$ by
\begin{equation}\label{interp-eq}
\begin{aligned}
I_\disc h=\argminB_{\omega\in \cK_\disc}\big( \| \Pi_\disc \omega - h \|_{L^2(\O)} &+ \| \nabla_\disc \omega - \nabla h \|_{L^2(\O)^d}\\ 
&+ \| \Delta_\disc w - \Delta h \|_{L^2(\O)} \big).
\end{aligned}
\end{equation}
Thus, from the definition of $\cS_\disc$, we have, for any $v \in \cK$,
\begin{equation}
\| \Pi_\disc I_\disc v - v \|_{L^2(\O)} + \| \nabla_\disc I_\disc v - \nabla v \|_{L^2(\O)^d} + \| \Delta_\disc I_\disc v - \Delta v \|_{L^2(\O)}
\leq \cS_\disc(v).
\label{interpl}
\end{equation}

\begin{theorem}[Convergence rates]\label{them-error}
Assume that $\O\subset \RR^d\; (d>1)$ is a bounded domain with the boundary $\dr\O$, $f \in L^2(\O)$, $\psi \in C^2(\O)\cap C(\overline \O)$, $\psi \geq 0$ on $\dr\O$, and $\bar c$ be the solution to \eqref{weak-obs}. Let $\disc$ be a gradient discretisation such that $\cK_\disc \neq \Phi$, then the discrete scheme \eqref{disc-obs} has a unique solution $c \in \cK_\disc$. Moreover, if it is assumed that $\Delta^2 \bar c \in L^2(\O)$, then: 

\begin{equation}
\begin{aligned}
\| \Pi_\disc c &- \bar c \|_{L^2(\O)}\\
&\leq
\cC_\disc\frac{\sqrt 2}{2}\cW_\disc(\Delta \bar c)+(\cC_\disc\frac{\sqrt 2+2}{2}+1)\cS_\disc(\bar c)+\cC_\disc R_\disc(\bar c)^{\frac{1}{2}},
\end{aligned}\label{est-1}
\end{equation}
\begin{equation}
\begin{aligned}
\| \nabla_\disc c &- \nabla\bar c \|_{L^2(\O)^d}\\
&\leq
\cC_\disc\frac{\sqrt 2}{2}\cW_\disc(\Delta \bar c)+(\cC_\disc\frac{\sqrt 2+2}{2}+1)\cS_\disc(\bar c)+\cC_\disc R_\disc(\bar c)^{\frac{1}{2}},
\end{aligned}\label{est-2}
\end{equation}
\begin{equation}\label{est-3}
\| \Delta_\disc c - \Delta \bar c \|_{L^2(\O)}
\leq 
\frac{\sqrt 2}{2}W_\disc(\Delta \bar c)+\frac{\sqrt 2+2}{2}\cS_\disc(\bar c)+R_\disc(\bar c)^{\frac{1}{2}},
\end{equation}
where $R_\disc(\bar c):=\dsp\int_\O (\Delta^2 \bar c +f)(\psi - \Pi_\disc I_\disc \bar c)\ud \x$. 
\end{theorem}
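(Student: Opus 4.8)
The plan is to follow the classical Falk-type argument for variational inequalities, transported into the GDM language. First I would establish existence and uniqueness of the discrete solution: the bilinear form $a_\disc(c,\varphi)=\int_\O \Delta_\disc c\,\Delta_\disc\varphi\ud\x$ is, by the assumption that $\|\Delta_\disc\cdot\|_{L^2(\O)}$ is a norm on $X_{\disc,0}$, symmetric, continuous and coercive on the finite-dimensional space $X_{\disc,0}$; since $\cK_\disc$ is a nonempty closed convex subset, the Lions--Stampacchia theorem (or equivalently minimisation of the associated strictly convex energy over $\cK_\disc$) gives a unique $c\in\cK_\disc$ solving \eqref{disc-obs}.

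For the error estimates, the key step is to test the discrete inequality \eqref{disc-obs} with $\varphi = I_\disc\bar c \in \cK_\disc$ and to simultaneously exploit the continuous inequality \eqref{weak-obs}. Writing $\|\Delta_\disc c - \Delta\bar c\|_{L^2}^2 = \int_\O \Delta_\disc c\,\Delta_\disc(c-I_\disc\bar c) + \int_\O \Delta_\disc c\,\Delta_\disc(I_\disc\bar c) - \int_\O\Delta_\disc c\,\Delta\bar c - \int_\O(\Delta_\disc c-\Delta\bar c)\Delta\bar c$, I would bound the first term using \eqref{disc-obs} by $\int_\O f\,\Pi_\disc(c-I_\disc\bar c)$. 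Then I would introduce $\Delta^2\bar c$: since $\Delta^2\bar c\in L^2(\O)$, integration by parts against the continuous equation shows $-\int_\O f\,v = \int_\O \Delta^2\bar c\, v$ holds on test functions, and more precisely the variational inequality \eqref{weak-obs} combined with $\Delta^2\bar c\in L^2$ yields $\int_\O(\Delta^2\bar c + f)(\psi - v)\ge 0$ for admissible $v$ and an identity on the contact set; this is exactly where the residual quantity $R_\disc(\bar c)=\int_\O(\Delta^2\bar c+f)(\psi-\Pi_\disc I_\disc\bar c)\ud\x$ enters, capturing the mismatch between the discrete obstacle constraint $\Pi_\disc c\le\psi$ and the continuous one. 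The limit-conformity term $\cW_\disc(\Delta\bar c)$ will appear when I replace $\int_\O \bar c_{\text{-related}}\,\Delta_\disc\omega$ by $\int_\O \Delta\bar c\,\Pi_\disc\omega$ via the definition \eqref{conf-obs} with $v=\Delta\bar c$, and the consistency term $\cS_\disc(\bar c)$ appears whenever $\Pi_\disc I_\disc\bar c$, $\nabla_\disc I_\disc\bar c$ or $\Delta_\disc I_\disc\bar c$ is compared to $\bar c$, $\nabla\bar c$, $\Delta\bar c$ through \eqref{interpl}.

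Collecting all contributions gives a quadratic inequality of the shape $\|\Delta_\disc c-\Delta\bar c\|_{L^2}^2 \le \alpha\,\|\Delta_\disc c-\Delta\bar c\|_{L^2} + \beta$, where $\alpha$ is a linear combination of $\cW_\disc(\Delta\bar c)$ and $\cS_\disc(\bar c)$ (the latter multiplied by factors coming from the Poincaré inequalities \eqref{eq-ponc} that convert $\|\Pi_\disc\cdot\|$ and $\|\nabla_\disc\cdot\|$ bounds into $\|\Delta_\disc\cdot\|$ bounds) and $\beta$ contains the residual $R_\disc(\bar c)$. Solving this quadratic inequality and using $\sqrt{a+b}\le\sqrt a+\sqrt b$ and $\frac{1}{2}(x^2+y^2)$-type Young inequalities to extract the constants $\frac{\sqrt2}{2}$ and $\frac{\sqrt2+2}{2}$ yields \eqref{est-3}. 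Finally, \eqref{est-1} and \eqref{est-2} follow by the triangle inequality $\|\Pi_\disc c-\bar c\|_{L^2}\le\|\Pi_\disc(c-I_\disc\bar c)\|_{L^2}+\|\Pi_\disc I_\disc\bar c-\bar c\|_{L^2}$, estimating the first summand by $\cC_\disc\|\Delta_\disc(c-I_\disc\bar c)\|_{L^2}$ through \eqref{eq-ponc}, then $\|\Delta_\disc(c-I_\disc\bar c)\|_{L^2}\le\|\Delta_\disc c-\Delta\bar c\|_{L^2}+\|\Delta_\disc I_\disc\bar c-\Delta\bar c\|_{L^2}$, inserting \eqref{est-3} and \eqref{interpl}, and likewise for the gradient.

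The main obstacle I anticipate is the careful handling of the sign of the residual term: one must verify that testing \eqref{disc-obs} with $I_\disc\bar c$ and \eqref{weak-obs} with a suitable lift produces cross terms that are controlled \emph{in the right direction}, so that $R_\disc(\bar c)$ enters with a usable sign and can be kept as $R_\disc(\bar c)^{1/2}$ rather than needing its absolute value; this is the step where the hypotheses $\psi\ge 0$ on $\dr\O$ and $\psi\in C^2$ are used to make sense of the pointwise obstacle comparison, and where the bookkeeping of which inequality supplies which bound is most delicate.
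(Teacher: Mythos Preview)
Your proposal is correct and follows essentially the same Falk-type route as the paper: apply limit-conformity with $v=\Delta\bar c$ and $\omega=I_\disc\bar c-c$, test \eqref{disc-obs} with $\varphi=I_\disc\bar c$, and extract the residual $R_\disc(\bar c)$ via $\Delta^2\bar c\in L^2$. The sign concern you flag is settled exactly as you anticipate---from \eqref{weak-obs} with $\varphi=\bar c-v$ ($v\in C_0^\infty(\O)$, $v\ge 0$) one gets $\Delta^2\bar c+f\ge 0$ a.e., and since $c\in\cK_\disc$ gives $\psi-\Pi_\disc c\ge 0$, the cross term $\int_\O(\Delta^2\bar c+f)(\psi-\Pi_\disc c)\ud\x\ge 0$ is simply dropped; the only organisational difference is that the paper writes its quadratic inequality in $\|\Delta_\disc(I_\disc\bar c-c)\|_{L^2(\O)}$ rather than in $\|\Delta_\disc c-\Delta\bar c\|_{L^2(\O)}$, which makes the constant bookkeeping for \eqref{est-1}--\eqref{est-3} slightly more direct.
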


\begin{proof}
The existence and uniqueness of the discrete solution follow from Stampacchia's theorem \cite{Stampacchia-2000}, thanks to the assumption that $\cK_\disc$ is a non empty set. Under the regularity $\Delta^2\bar c \in L^2(\O)$, we can apply \eqref{conf-obs} to $v:=\Delta\bar c \in H_\Delta(\O)$ with taking $w:=I_\disc \bar c - c \in X_{\disc,0}$ to deduce
\begin{equation*}
\begin{aligned}
\int_\O \Delta\bar c(\x) \Delta_\disc(I_\disc \bar c - c)(\x) \ud \x
&+\dsp\int_\O \Delta^2\bar c(\x) \Pi_\disc (I_\disc \bar c - c)(\x) \ud \x\\
&\leq
\|\Delta_\disc(I_\disc \bar c - c)\|_{L^2(\O)}
\cW_\disc(\Delta\bar c),
\end{aligned}
\end{equation*}
where $I_\disc$ is the interpolant defined by \eqref{interp-eq}. Since $c$ is the solution to \eqref{disc-obs}, the above inequality implies 
\begin{equation}\label{new-1}
\begin{aligned}
&\int_\O\Delta_\disc(I_\disc \bar c - c)(\x)(\Delta\bar c(\x)-\Delta_\disc c(\x)) \ud \x
+\dsp\int_\O \Delta^2\bar c(\x) \Pi_\disc (I_\disc \bar c - c)(\x) \ud \x\\
&\leq\|\Delta_\disc(I_\disc \bar c - c)\|_{L^2(\O)}
\cW_\disc(\Delta\bar c)
-\dsp\int_\O f(\x) \Pi_\disc(I_\disc \bar c - c)(\x) \ud \x.
\end{aligned}
\end{equation}
Thanks to the regularity assumption $\Delta^2\bar c \in L^2(\O)$, taking $\varphi:=\bar c-v$ (with a non negative $v \in C_0^\infty(\O)$) as a generic function in \eqref{weak-obs} shows that $(f+\Delta^2\bar c) \geq 0$ for a.e. in $\O$. Thus, we have
\begin{equation*}
\begin{aligned}
&\dsp\int_\O \Delta^2\bar c(\x) \Pi_\disc (I_\disc \bar c - c)(\x) \ud \x\\
&=\dsp\int_\O (\Delta^2\bar c(\x) +f(\x)) \Pi_\disc (I_\disc \bar c - c)(\x) \ud \x
-\dsp\int_\O f(\x) \Pi_\disc (I_\disc \bar c - c)(\x) \ud \x\\
&=\dsp\int_\O (\Delta^2\bar c(\x) +f(\x)) (\Pi_\disc I_\disc \bar c(\x) - \psi(\x)) \ud \x
+\dsp\int_\O (\Delta^2\bar c(\x) +f(\x)) (\psi(\x) - \Pi_\disc I_\disc \bar c(\x))\\
&\quad-\dsp\int_\O f(\x) \Pi_\disc (I_\disc \bar c - c)(\x) \ud \x\\
&\geq \dsp\int_\O (\Delta^2\bar c(\x) +f(\x)) (\Pi_\disc I_\disc \bar c(\x) - \psi(\x)) \ud \x
-\dsp\int_\O f(\x) \Pi_\disc (I_\disc \bar c(\x) - c(\x)) \ud \x,
\end{aligned}
\end{equation*}
since the quantity $\dsp\int_\O (\Delta^2\bar c(\x) +f(\x)) (\psi(\x) - \Pi_\disc I_\disc \bar c(\x)) \ud \x \geq 0$. Substituting the above relation in \eqref{new-1} yields
\begin{equation*}
\int_\O\Delta_\disc(I_\disc \bar c(\x)-c(\x))(\Delta\bar c(\x)-\Delta_\disc c(\x)) \ud \x
\leq\|\Delta_\disc(I_\disc \bar c - c)\|_{L^2(\O)}
\cW_\disc(\Delta\bar c)+R_\disc(\bar c).
\end{equation*}
Introduce the term $\Delta_\disc I_\disc \bar c$ and apply the Cauchy--Schwarz's inequality to obtain
\begin{equation}
\begin{aligned}
\|\Delta_\disc (I_\disc \bar c &-c) \|^{2}\\
&\leq
\int_\O\Delta_\disc(I_\disc \bar c(\x) - c(\x))(\Delta_\disc I_\disc \bar c(\x)-\Delta\bar c(\x)) \ud \x\\ 
&\quad+\|\Delta_\disc(I_\disc \bar c - c)\|_{L^2(\O)}
\cW_\disc(\Delta\bar c)+R_\disc(\bar c)^+\\
&\leq \| \Delta_\disc(I_\disc \bar c - c) \|_{L^2(\O)}
\big( \| \Delta_\disc I_\disc \bar c-\Delta\bar c\|_{L^2(\O)} 
+\cW_\disc(\Delta\bar c)\big)\\
&\quad+R_\disc(\bar c)^+,
\end{aligned}
\label{eq-100}
\end{equation}
which leads to, thanks to \eqref{interpl}
\begin{equation*}
\begin{aligned}
\|\Delta_\disc (I_\disc \bar c-c) \|_{L^2(\O)}^{2}
\leq
\|\Delta_\disc (I_\disc \bar c - c) \|_{L^2(\O)}
\big(\cW_\disc(\Delta\bar c)
+\cS_\disc(\bar c)\big)
+R_\disc(\bar c)^+.
\end{aligned}
\end{equation*}
Applying Young's inequality to this relation gives
\begin{equation}
\|\Delta_\disc I_\disc \bar c - \Delta_\disc c\|_{L^2(\O)} 
\leq
\big(
\frac{1}{2}\left(\cW_\disc(\Delta\bar c)+\cS_\disc(\bar c)\right)^{2}+R_\disc(\bar c)^+ \big)^{\frac{1}{2}}.
\label{proof1obs}
\end{equation}
This inequality with the use of triangle inequality, the fact that $\forall a,b \in \RR^+,\;(a+b)^{\frac{1}{2}}\leq a^{\frac{1}{2}} + b^{\frac{1}{2}}$ and \eqref{interpl} yield Estimate \eqref{est-1}. Using the definition of $\cC_\disc$, and \eqref{proof1obs}, and \eqref{interpl}, one has  
\begin{equation*}
\|\Pi_\disc I_\disc \bar c-c\|_{L^2(\O)} \leq
\cC_\disc \big(
\frac{1}{2}\left(\cW_\disc(\Delta\bar c)+\cS_\disc(\bar c)\right)^{2}+R_\disc(\bar c)^+ \big)^{\frac{1}{2}}
+\cS_\disc(\bar c).
\end{equation*}
Applying $(a+b)^{\frac{1}{2}}\leq a^{\frac{1}{2}} + b^{\frac{1}{2}}$ again establishes Estimate \eqref{est-3}. Estimate \eqref{est-2} follows in a similar way.
\end{proof}

From Theorem \ref{them-error}, we can obtain an optimal convergence rate for the approximation of fourth order variational inequalities in terms of the mesh size. \cite[Remark 2.24]{30} provides the link between the mesh size and the parameters $\cC_\disc$, $\cS_\disc$, and $\cW_\disc$ that appear in Estimates \eqref{est-1}--\eqref{est-3}. Under the assumptions stated in the previous theorem, it is also clear to establish an estimate on the term $R_\disc(\bar c)$ sinse it can be rewritten as
\[
\begin{aligned}
R_\disc(\bar c)&=\dsp\int_\O (\Delta^2 \bar c +f)(\psi - \Pi_\disc I_\disc \bar c) \ud \x\\
&=\dsp\int_\O (\Delta^2 \bar c +f)(\psi - \bar c) \ud \x
+\dsp\int_\O (\Delta^2 \bar c +f)(\bar c - \Pi_\disc I_\disc \bar c) \ud \x\\
&\leq \|\Delta^2 \bar c +f\|_{L^2(\O)} 
S_\disc(\bar c).
\end{aligned}
\]

Note that the regularity assumption that $\Delta^2\bar c \in L^2(\O)$ in the previous theorem is only used to establish the convergence rate whereas establishing the convergence of the discrete scheme \eqref{disc-obs} can be obtained under the standard hypothesis on the continuous solution as in the following theorem.

\begin{theorem}[Convergence]\label{cor-conv}
Assume that $\O\subset \RR^d\; (d>1)$ is a bounded domain with the boundary $\dr\O$, $f \in L^2(\O)$, $\psi \in C^2(\O)\cap C(\overline \O)$, $\psi \geq 0$ on $\dr\O$, and let $(\disc_m)_{m\in\NN}$ be a sequence of a gradient discretisation, that is coercive, consistent, limit--conforming and compact. Let $\bar c$ be the solution to \eqref{weak-obs}. If $\cK_{\disc_m}$ is a non empty set, then there exists a unique solution $u_m \in \cK_{\disc_m}$ to the discrete problem \eqref{disc-obs} (with $\disc=\disc_m$), and, as $m \to \infty$,
\begin{itemize}
\item $\Pi_{\disc_m}c_m$ converges strongly to $\bar c$ in $L^2(\O)$,
\item $\nabla_{\disc_m}c_m$ converges strongly to $\nabla\bar c$ in $L^2(\O)^d$, and
\item $\Delta_{\disc_m}c_m$ converges strongly to $\Delta\bar c$ in $L^2(\O)$.
\end{itemize}
\end{theorem}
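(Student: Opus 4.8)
The plan is to run the standard compactness argument of the gradient discretisation method, adapted to the inequality \eqref{disc-obs}. Existence and uniqueness of $c_m$ for each fixed $m$ is already in hand (it was obtained in Theorem~\ref{them-error} from Stampacchia's theorem, using $\cK_{\disc_m}\neq\emptyset$). The first substantive step is an a priori bound. Since $\bar c\in\cK$, the interpolant $I_{\disc_m}\bar c$ lies in $\cK_{\disc_m}$ and, by consistency, $\Pi_{\disc_m}I_{\disc_m}\bar c\to\bar c$ in $L^2(\O)$, $\nabla_{\disc_m}I_{\disc_m}\bar c\to\nabla\bar c$ in $L^2(\O)^d$, $\Delta_{\disc_m}I_{\disc_m}\bar c\to\Delta\bar c$ in $L^2(\O)$; in particular these are bounded. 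Testing \eqref{disc-obs} (with $\disc=\disc_m$) against $\varphi=I_{\disc_m}\bar c$, rearranging, and using Cauchy--Schwarz together with the coercivity bound $\|\Pi_{\disc_m}c_m\|_{L^2(\O)}\le\cC_P\|\Delta_{\disc_m}c_m\|_{L^2(\O)}$ gives an inequality of the form $x^2\le ax+b$ with $a,b$ bounded uniformly in $m$, hence a uniform bound on $\|\Delta_{\disc_m}c_m\|_{L^2(\O)}$; coercivity then also bounds $\|\Pi_{\disc_m}c_m\|_{L^2(\O)}$ and $\|\nabla_{\disc_m}c_m\|_{L^2(\O)^d}$.

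Next I would extract limits: by compactness, up to a subsequence $\Pi_{\disc_m}c_m\to\bar u$ strongly in $L^2(\O)$ with $\bar u\in H_0^1(\O)$, and by boundedness $\nabla_{\disc_m}c_m\rightharpoonup G$ weakly in $L^2(\O)^d$, $\Delta_{\disc_m}c_m\rightharpoonup H$ weakly in $L^2(\O)$. The identification step uses limit--conformity: for every $v\in H_\Delta(\O)$, \eqref{conf-obs} gives $\int_\O(\Delta v\,\Pi_{\disc_m}c_m-v\,\Delta_{\disc_m}c_m)\ud\x\le\|\Delta_{\disc_m}c_m\|_{L^2(\O)}\cW_{\disc_m}(v)\to0$, so in the limit $\int_\O\Delta v\,\bar u\ud\x=\int_\O v\,H\ud\x$; taking $v\in C_0^\infty(\O)$ shows $H=\Delta\bar u$ distributionally, hence $\bar u\in H_\Delta(\O)$, and the analogous conformity pairing for $\nabla_\disc$ versus $\Pi_\disc$ identifies $G=\nabla\bar u$ and, since $X_{\disc_m,0}$ encodes the homogeneous boundary data, upgrades this to $\bar u\in H_0^2(\O)$. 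Finally, $\Pi_{\disc_m}c_m\le\psi$ and the (strong, hence weakly closed) convergence give $\bar u\le\psi$ a.e., so $\bar u\in\cK$.

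Then I would pass to the limit in the discrete inequality. Fix $\varphi\in\cK$, test \eqref{disc-obs} against $I_{\disc_m}\varphi\in\cK_{\disc_m}$ and rewrite it as $\|\Delta_{\disc_m}c_m\|_{L^2(\O)}^2\le\int_\O\Delta_{\disc_m}c_m\,\Delta_{\disc_m}I_{\disc_m}\varphi\ud\x+\int_\O f\,\Pi_{\disc_m}(c_m-I_{\disc_m}\varphi)\ud\x$. On the right, consistency gives $\Delta_{\disc_m}I_{\disc_m}\varphi\to\Delta\varphi$ and $\Pi_{\disc_m}I_{\disc_m}\varphi\to\varphi$ strongly, while $\Delta_{\disc_m}c_m\rightharpoonup\Delta\bar u$ weakly and $\Pi_{\disc_m}c_m\to\bar u$ strongly, so the right-hand side tends to $\int_\O\Delta\bar u\,\Delta\varphi\ud\x+\int_\O f(\bar u-\varphi)\ud\x$; on the left, weak lower semicontinuity of the $L^2(\O)$ norm gives $\|\Delta\bar u\|_{L^2(\O)}^2\le\liminf_m\|\Delta_{\disc_m}c_m\|_{L^2(\O)}^2$. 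Combining yields $\int_\O\Delta\bar u\,\Delta(\bar u-\varphi)\ud\x\le\int_\O f(\bar u-\varphi)\ud\x$ for all $\varphi\in\cK$, i.e. $\bar u$ solves \eqref{weak-obs}; by uniqueness $\bar u=\bar c$, and since the limit does not depend on the subsequence the whole sequences converge. For strong convergence, the choice $\varphi=\bar c$ (test against $I_{\disc_m}\bar c$) in the last estimate gives $\limsup_m\|\Delta_{\disc_m}c_m\|_{L^2(\O)}^2\le\|\Delta\bar c\|_{L^2(\O)}^2$, which with weak lower semicontinuity forces $\|\Delta_{\disc_m}c_m\|_{L^2(\O)}\to\|\Delta\bar c\|_{L^2(\O)}$; norm plus weak convergence in the Hilbert space $L^2(\O)$ gives $\Delta_{\disc_m}c_m\to\Delta\bar c$ strongly. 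Then $\Delta_{\disc_m}(I_{\disc_m}\bar c-c_m)\to0$ strongly, so coercivity ($\cC_{\disc_m}\le\cC_P$) yields $\|\Pi_{\disc_m}(I_{\disc_m}\bar c-c_m)\|_{L^2(\O)}\to0$ and $\|\nabla_{\disc_m}(I_{\disc_m}\bar c-c_m)\|_{L^2(\O)^d}\to0$, and combining with the consistency convergences of $\Pi_{\disc_m}I_{\disc_m}\bar c$ and $\nabla_{\disc_m}I_{\disc_m}\bar c$ and the triangle inequality gives the remaining two strong convergences.

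I expect the genuine obstacle to be the identification in the second paragraph: proving that the weak limit $H$ of $\Delta_{\disc_m}c_m$ is exactly $\Delta\bar u$ and, more delicately, that $\bar u\in H_0^2(\O)$ rather than merely $H_\Delta(\O)\cap H_0^1(\O)$, so that $\bar u\in\cK$ and the uniqueness of \eqref{weak-obs} can be invoked. This is where the precise interplay between $\Pi_\disc$, $\nabla_\disc$, $\Delta_\disc$ and the limit--conformity property must be exploited; the a priori estimate and the passage to the limit in the inequality are then routine.
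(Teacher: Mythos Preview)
Your proposal is correct and follows essentially the same route as the paper: a priori bound via testing with $I_{\disc_m}\bar c$, extraction of weak limits, identification of the limit as an element of $\cK$, passage to the limit in the inequality using $I_{\disc_m}\varphi$, uniqueness, and then upgrading to strong convergence (norm convergence for $\Delta_{\disc_m}c_m$, coercivity plus triangle inequality for $\nabla_{\disc_m}c_m$). The one point where the paper differs is precisely the step you flagged as the genuine obstacle: rather than arguing the identification $H=\Delta\bar u$, $G=\nabla\bar u$, $\bar u\in H_0^2(\O)$ directly from \eqref{conf-obs} (note there is no separate conformity indicator for $\nabla_\disc$ here), the paper simply invokes the fourth-order analogue of the regularity lemma \cite[Lemma~2.15]{S1}, which packages this limit--conformity argument into a black box.
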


\begin{proof}
Let $c:=c_m$ and $\varphi:=I_{\disc_m}\bar c \in \cK_{\disc_m}$ in \eqref{disc-obs}, where $I_\disc$ is defined by \eqref{interp-eq} with $\disc=\disc_m$. Use the discrete relation \eqref{eq-ponc} to obtain
\[
\begin{aligned}
\|\Delta_{\disc_m}c_m\|_{L^2(\O)}^{2}
&\leq \| f \|_{L^2(\O)} \| \Pi_{\disc_m}(c_m - I_{\disc_m}\bar c) \|_{L^2(\O)} \\
&\quad+\|\Delta_{\disc_m}c_m\|_{L^2(\O)} \|\Delta_{\disc_m}I_{\disc_m}\bar c\|_{L^2(\O)}\\
&\leq \cC_P \| f \|_{L^2(\O)} \| \Delta_{\disc_m}(c_m - I_{\disc_m}\bar c) \|_{L^2(\O)} \\
&\quad+\|\Delta_{\disc_m}c_m\|_{L^2(\O)} \|\Delta_{\disc_m}I_{\disc_m}\bar c\|_{L^2(\O)},
\end{aligned}
\]
where $\cC_P$ does not depend on $m$. Thanks to Young’s inequality, the above inequality gives
\begin{equation}\label{eq-bound-1}
\|\Delta_{\disc_m}u_m\|_{L^2(\O)}^{2}
\leq 
C\left( \| f \|_{L^2(\O)} + \| \Delta_{\disc_m}I_{\disc_m}\bar c) \|_{L^2(\O)} \right),
\end{equation}
where $C$ is also independent on $m$. By the triangle inequality and \eqref{interpl}, one writes
\[
\begin{aligned}
\| \Delta_{\disc_m}I_{\disc_m}\bar c \|_{L^2(\O)}&\leq 
\| \Delta_{\disc_m}I_{\disc_m}\bar c -\Delta\bar c \|_{L^2(\O)} + \| \Delta\bar c \|_{L^2(\O)}\\
&\leq \cS_{\disc_m}(\bar c) + \| \Delta\bar c \|_{L^2(\O)}. 
\end{aligned}
\]
The consistency of $\disc_m$ and the standard regularity $\Delta\bar c \in L^2(\O)$ show that the quantity $\| \Delta_{\disc_m}I_{\disc_m}\bar c \|_{L^2(\O)}$ is bounded and thus Estimate \eqref{eq-bound-1} proves that $\|\Delta_{\disc_m}c_m\|_{L^2(\O)}$ remains bounded.

The regularity results of the limit for the second order gradient discretisation stated in \cite[Lemma 2.15]{S1} can easily be extended to the fourth order gradient discretisation. These results and the boundedness of $\|\Delta_{\disc_m}c_m\|_{L^2(\O)}$ established above show that there exists $\bar c\in H_0^2(\O)$, such that $\Pi_{\disc_m}\bar c_m \rightharpoonup \bar c$ in $L^2(\O)$, $\nabla_{\disc_m}\bar c_m \rightharpoonup \nabla\bar c$ in $L^2(\O)^d$ and $\Delta_{\disc_m}\bar c_m \rightharpoonup \Delta\bar c$ in $L^2(\O)$. From the compactness property of $\disc_m$, we see that $\Pi_{\disc_m}\bar c_m$ converges strongly to $\bar c$ in $L^2(\O)$. Now, since $c_m\in\cK_\disc$, we obtain $\Pi_{\disc_m}c_m \leq \psi$, which yields $\bar c \in \cK$.

Let us now show that $\bar c$ satisfies \eqref{weak-obs}. For any $v \in \cK$, the interpolant defined in \eqref{interp-eq} and the consistency property of $(\disc_m)_{m\in\NN}$ imply $\Pi_{\disc_m}I_{\disc_m}v \rightarrow v$ in $L^2(\O)$, $\nabla_{\disc_m}I_{\disc_m}v \rightarrow \nabla v$ in $L^2(\O)^d$, and $\Delta_{\disc_m}I_{\disc_m}v \rightarrow \Delta v$ in $L^2(\O)$. Choosing $c:=c_m$ and $\varphi:=I_{\disc_m}v \in \cK_{\disc_m}$ in \eqref{disc-obs} and pass to the limit show that \eqref{weak-obs} is satisfied for any $\varphi \in \cK$, which concludes that $c$ is the continuous solution, thanks to the strong--weak convergences of sequences. 

In order to obtain the strong convergence of $\Delta_{\disc_m}c_m$ and $\nabla_{\disc_m}c_m$, we take $\varphi:=I_{\disc_m}\bar c$ as a generic function in \eqref{disc-obs} for $\disc_m=\disc$. It implies, due to $c$ is the solution to \eqref{weak-obs} 
\[
\begin{aligned}
0 &\leq \|\Delta_{\disc_m}c_m-\Delta\bar c\|_{L^2(\O)}^{2}\\
&=\dsp\int_\O (\Delta_{\disc_m}c_m(\x)-\Delta\bar c(\x))^2 \ud \x\\
&\leq \dsp\int_\O f(\x)\Pi_{\disc_m}(c_m -I_{\disc_m}\bar c)(\x) \ud \x
+\dsp\int_\O \Delta\bar c(\x)^2 \ud \x.
\end{aligned}
\]
Pass to the limit in this inequality and use again the strong--weak convergence of sequences to conclude the strong convergence of $\Delta_{\disc_m}c_m$. Finally, introduce $\Delta_{\disc_m}I_{\disc_m}\bar c$ and use the triangular inequality to attain, thanks to \eqref{corc-obs} and \eqref{interpl}
\[
\begin{aligned}
\|\nabla_{\disc_m}c_m&- \nabla\bar c\|_{L^2(\O)^d}\\
&\leq \|\nabla_{\disc_m}(c_m-I_{\disc_m}\bar c)\|_{L^2(\O)^d}
+\|\nabla_{\disc_m}I_{\disc_m}\bar c - \nabla\bar c\|_{L^2(\O)^d}\\
&\leq \cC_{\disc_m} \|\Delta_{\disc_m}(c_m- I_{\disc_m}\bar c)\|_{L^2(\O)}
+\cS_{\disc_m}(\bar c).
\end{aligned}
\]
Since $(\disc_m)_{m\in\NN}$ is coercive and consistent, the strong convergence of $\Delta_{\disc_m}c_m$ established above therefore leads to $\cC_{\disc_m} \|\Delta_{\disc_m}(c_m- I_{\disc_m}\bar c)\|_{L^2(\O)}
+\cS_{\disc_m}(\bar c) \to 0$, as $m\to \infty$, which completes the proof.
\end{proof}


\bibliographystyle{siam}
\bibliography{pvi-ref}

\begin{thebibliography}{10}

\bibitem{Antonietti-13}
{\sc P.~Antonietti, L.~Beir{\~a}o~da Veiga, and M.~Verani}, {\em A mimetic
  discretization of elliptic obstacle problems}, Mathematics of Computation, 82
  (2013), pp.~1379--1400.

\bibitem{Antonietti-2014}
{\sc P.~F. Antonietti, L.~Beir{\~a}o~da Veiga, N.~Bigoni, and M.~Verani}, {\em
  Mimetic finite differences for nonlinear and control problems}, Mathematical
  Models and Methods in Applied Sciences, 24 (2014), pp.~1457--1493.

\bibitem{R-3}
{\sc H.~Blum, R.~Rannacher, and R.~Leis}, {\em On the boundary value problem of
  the biharmonic operator on domains with angular corners}, Mathematical
  Methods in the Applied Sciences, 2 (1980), pp.~556--581.

\bibitem{Brezz-1977}
{\sc F.~Brezzi, W.~W. Hager, and P.-A. Raviart}, {\em Error estimates for the
  finite element solution of variational inequalities}, Numerische Mathematik,
  28 (1977), pp.~431--443.

\bibitem{R-4}
{\sc L.~A. Caffarelli and A.~Friedman}, {\em The obstacle problem for the
  biharmonic operator}, Annali della Scuola Normale Superiore di Pisa-Classe di
  Scienze, 6 (1979), pp.~151--184.

\bibitem{DG-2019}
{\sc J.~Cui and Y.~Zhang}, {\em A new analysis of discontinuous galerkin
  methods for a fourth order variational inequality}, Computer Methods in
  Applied Mechanics and Engineering, 351 (2019), pp.~531--547.

\bibitem{30}
{\sc J.~Droniou, R.~Eymard, T.~Gallou\"et, C.~Guichard, and R.~Herbin}, {\em
  The gradient discretisation method}, Mathematics \& Applications, Springer,
  Heidelberg, 2018.

\bibitem{S1}
\leavevmode\vrule height 2pt depth -1.6pt width 23pt, {\em The gradient
  discretisation method}, Mathematics \& Applications, Springer, Heidelberg,
  2018.

\bibitem{Droniou-2010}
{\sc J.~Droniou, R.~Eymard, T.~Gallou{\"e}t, and R.~Herbin}, {\em A unified
  approach to mimetic finite difference, hybrid finite volume and mixed finite
  volume methods}, Mathematical Models and Methods in Applied Sciences, 20
  (2010), pp.~265--295.

\bibitem{F-1974}
{\sc R.~S. Falk}, {\em Error estimates for the approximation of a class of
  variational inequalities}, Mathematics of Computation, 28 (1974),
  pp.~963--971.

\bibitem{Free-1988}
{\sc A.~Friedman and V.~Principles}, {\em Free boundary problems}, Malabar,
  Fla.: RE Krieger,  (1988).

\bibitem{Francesco-1980}
{\sc A.~Fusciardi and F.~Scarpini}, {\em A mixed finite element solution of
  some biharmonic unilateral problem}, Numerical Functional Analysis and
  Optimization, 2 (1980), pp.~397--420.

\bibitem{ES-1}
{\sc R.~Glowinski and J.~Lions}, {\em Tr emoli eres, r.(1981): Numerical
  analysis of variational inequalities}.

\bibitem{R-2}
{\sc P.~Grisvard}, {\em Elliptic problems in nonsmooth domains}, SIAM, 2011.

\bibitem{FV-2001}
{\sc R.~Herbin and E.~Marchand}, {\em Finite volume approximation of a class of
  variational inequalities}, IMA Journal of Numerical Analysis, 21 (2001),
  pp.~553--585.

\bibitem{Stampacchia-2000}
{\sc D.~Kinderlehrer and G.~Stampacchia}, {\em An introduction to variational
  inequalities and their applications}, SIAM, 2000.

\bibitem{Wang-1990}
{\sc W.~Lie-heng}, {\em Some nonconforming finite element approximations of a
  fourth order variational inequality with displacement obstacle}, Mathematica
  Numerica Sinica,  (1990).

\bibitem{N-21-Louis}
{\sc J.-L. Lions}, {\em Inequalities in mechanics and physics}, Springer, 1976.

\bibitem{R-5}
{\sc A.~Luis}, {\em The two-obstacle problem for the biharmonic operator},
  SUPPORTING INSTITUTIONS, 103 (1982), p.~325.

\bibitem{R-1}
{\sc S.~Nazarov and B.~A. Plamenevsky}, {\em Elliptic problems in domains with
  piecewise smooth boundaries}, in Elliptic Problems in Domains with Piecewise
  Smooth Boundaries, de Gruyter, 2011.

\bibitem{JF-1987}
{\sc J.-F. Rodrigues}, {\em Obstacle problems in mathematical physics},
  Elsevier, 1987.

\bibitem{crouzeix-raviart}
{\sc D.~Shi, C.~Wang, and Q.~Tang}, {\em Anisotropic crouzeix-raviart type
  nonconforming finite element methods to variational inequality problem with
  displacement obstacle}, Journal of Computational Mathematics,  (2015),
  pp.~86--99.

\bibitem{Wang-1992}
{\sc L.~Wang}, {\em Some strongly discontinuous nonconforming finite element
  approximations of a fourth order variational inequality with displacement
  obstacle}, Math. Numer. Sinica, 14 (1992), pp.~98--101.

\end{thebibliography}

\end{document}


The Frobenius inner product between the Hessian matrices of $v$ and $w$ is denoted by 
\[
\Delta v : \Delta w:=\dsp\sum_{i,j=1}^2 v_{x_i x_j} w_{x_i x_j}.
\]
We set the space
\[
H_0^2(\O)=\left\{ v \in H^2(\O)\; :\; v=0 \mbox{ on } \dr\O \right\},
\]
and the closed convex set
\[
\cK=\left\{ v \in H_0^2(\O)\; :\; v \geq \psi \mbox{ in } \O \right\}.
\]

The consistency property yields the following convergence for any $\varphi \in \cK_\disc$:
\begin{equation}\label{interp2}
\begin{aligned}
&\Pi_{\disc_m}I_{\disc_m}\varphi \to \varphi \mbox{ in } L^2(\O),\\
&\nabla_{\disc_m}I_{\disc_m}\varphi \to \nabla\varphi \mbox{ in } L^2(\O)^d  \mbox{ and }\\
&\Delta_{\disc_m}I_{\disc_m}\varphi \to \Delta\varphi \mbox{ in } L^2(\O).
\end{aligned}
\end{equation}

 Finally, introduce $\Delta_{\disc_m}I_{\disc_m}\bar c$ and use the triangular inequality to attain, thanks to \eqref{corc-obs} and \eqref{interpl}
\[
\begin{aligned}
\|\nabla_{\disc_m}u_m&- \nabla\bar c\|_{L^2(\O)^d}\\
&\leq \|\nabla_{\disc_m}(u_m-I_{\disc_m}\bar c)\|_{L^2(\O)^d}
+\|\nabla_{\disc_m}I_{\disc_m}\bar c - \nabla\bar c\|_{L^2(\O)^d}\\
&\leq C_{\disc_m} \|\Delta_{\disc_m}(u_m- I_{\disc_m}\bar c)\|_{L^2(\O)}
+S_{\disc_m}(\bar c)
\end{aligned}
\]
The coercivity and the consistency of $(\disc_m)_{m\in\NN}$, and strong convergence of $\Delta_{\disc_m}u_m$ established above therefore lead to $C_{\disc_m} \|\Delta_{\disc_m}(u_m- I_{\disc_m}\bar c)\|_{L^2(\O)}
+S_{\disc_m}(\bar c) \to 0$, as $m\to \infty$, which completes the proof.